\newtheorem{thm}{Theorem}[section]
\newtheorem{lemma}[thm]{Lemma}
\newtheorem{cor}[thm]{Corollary}
\newtheorem{remark}[thm]{Remark}
\theoremstyle{definition}
\newtheorem{defn}[thm]{Definition}
\numberwithin{equation}{section}
\begin{document}

\title{ On the prime divisors of elements of a $D(-1)$ quadruple 
}
\author{Anitha  Srinivasan
\thanks{Department of Mathematics, Saint Louis University-Madrid campus, 
Avenida del Valle 34, 28003 Madrid, Spain}
}
\date{}
\maketitle

\begin{abstract}
 In \cite{FFM} it was shown that  
if $\{1, b, c, d\}$ is a $D(-1)$ quadruple with 
 $b<c<d$ and $b=1+r^2$, then 
$r$ and $b$ are not of the form $r=p^k, \hskip1mm 
 r=2p^k,\hskip1mm b=p$ or $ b=2p^k$, where
 $p$ is an odd prime and $k$ is a positive integer.
We show that an identical result holds for $c=1+s^2$,
 that is,  
the cases $s=p^k,\hskip2mm s=2p^k,\hskip2mm c=p$ and $ c=2p^k$
do not occur for the $D(-1)$ 
quadruple given above. For the integer $d=1+x^2$, we show 
that $d$ is not prime and that $x$ 
is divisible by at least two distinct odd primes.
Furthermore, we present several infinite families of integers 
$b$ such that the 
$D(-1)$ pair $\{1, b\}$ cannot
be extended to a $D(-1)$ quadruple.
  For instance, we show that 
if $r=5p$ where $p$ is an  odd prime, then the 
 $D(-1)$ pair $\{1, r^2+1\}$ cannot
be extended to a $D(-1)$ quadruple. 
\end{abstract}

 \noindent
 {\footnotesize{\textbf{ AMS Subject
Classification}}: 11D09, 11R29, 11E16.
 Keywords: Diophantine $m$ tuples, 
binary quadratic forms, Quadratic diophantine equation }
\bigskip
\section{Introduction}
Let $n$ be a non zero integer. 
 A $D(n)$ tuple is a set of positive integers such that 
if $a, b$ are any two elements from this set, then 
$ab+n=k^2$ for some integer $k$.  
We will look at the case $n=-1$. The cases $n=1$ and $n=4$ 
have been studied in great detail and still continue to be 
areas of active research. For more details on these cases the
reader may consult the references given in \cite{FFM}.

In the case of $n=-1$, it has been conjectured that there is no 
$D(-1)$ quadruple. 
 The first significant progress 
was made by Dujella and Fuchs \cite{DF},  
who showed that if $\{a, b, c, d\}$ is a 
$D(-1)$ quadruple with $a<b<c<d$, then $a=1$. 
Subsequently, Dujella {\it et. al.} \cite{DFF} proved that there
 are only a finite number of such quadruples. 
Filipin and Fujita (\cite{FF}) showed that 
 if $\{1, b, c\}$ is $D(-1)$ triple
with $b<c$, 
then there exist at most two $d$'s such that 
$\{1, b, c, d\}$ is a $D(-1)$ quadruple. 

Recently, Filipin {\it et al.} \cite{FFM} 
 showed that if $b=r^2+1$, then in each of the cases $r=p^k,
\hskip 2mm r=2p^k, \hskip2mm b=p$ and  $ b=2p^k$, where
 $p$ is an odd prime and  
$k$ is a positive integer,  
the $D(-1)$ pair $\{1, b\}$ cannot be extended to a
$D(-1)$ quadruple $\{1, b, c, d\}$ with $b<c<d$. 
The existence of a $D(-1)$ quadruple is closely 
related to the existence of 
solutions of quadratic diophantine equations 
of the type 
$X^2-(1+Z^2)Y^2=Z^2$.
The above result of \cite{FFM} 
is a corollary  of an extremely useful result
proved therein (\cite[Theorem 1.1]{FFM}) or Lemma 4.1 for a partial result) on the 
equivalence of certain solutions of the diophantine 
equation 
$X^2-(1+r^2)Y^2=r^2$.
We use this result in conjunction with our
methods from class groups to prove our theorems.
 Our first theorem 
shows that the result in \cite{FFM} mentioned above also holds for  
 $c$ and $d$. (Note that $d$ is known to be odd and $b, c $
 and $d$  cannot be
of the form $p^k$ with $k>1$ and $p$ prime.) 
While our proof of Theorem 1.1 below for $c$ and $s$ 
serves also to prove the identical result for $b$ and $r$ 
given in \cite{FFM},  
the proof in \cite{FFM} for this case 
 does not work for $c$ and $s$ as it is assumed therein 
that $b<c$.  
\begin{thm}  
Let $\{1, b, c, d\}$ with 
$1<b<c<d$ be a $D(-1)$ quadruple where 
  $c=1+s^2 $. Let  
$p$  be  an odd prime and $k$ a positive integer. Then
the cases $c=p,\hskip2mm d=p,\hskip2mm c=2p^k,\hskip2mm 
s=p^k $ and $s=2p^k$ do not occur. 
Moreover, if $d=1+x^2$, then $x$ is divisible by at least two 
distinct odd primes.
\end{thm}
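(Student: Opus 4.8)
The plan is to convert every $D(-1)$ relation into a Pell-type equation together with a factorization in $\mathbb{Z}[i]$, and then to read off the prime structure of $c,s,d,x$ from the ideal class group. Writing $b=1+r^2$, $c=1+s^2$, $d=1+x^2$ and setting $bc-1=t^2$, $bd-1=w^2$, $cd-1=z^2$, the conditions involving $d$ reduce, after substituting $d=1+x^2$ and using $c-1=s^2$, $b-1=r^2$, to
\begin{equation}
z^2-cx^2=s^2,\qquad w^2-bx^2=r^2. \label{pair}
\end{equation}
Thus $x$ is a common $Y$-coordinate of the two equations $X^2-cY^2=s^2$ and $X^2-bY^2=r^2$, and since $d$ is odd and $d>c$ we have $x$ even and $x>s$. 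Everything now hinges on controlling such an $x$ and the factorizations of $c=(s+i)(s-i)$ and $d=(x+i)(x-i)$.

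First I would apply the equivalence result of \cite{FFM}. Although it is stated for $X^2-(1+r^2)Y^2=r^2$, its proof is uniform in the parameter, so it transfers verbatim to $X^2-(1+s^2)Y^2=s^2$ with $s$ in place of $r$; this is exactly the point, flagged in the introduction, where the argument of \cite{FFM} (which assumes $b<c$) cannot be run for $c$ but ours can. Applying it to the first equation of \eqref{pair} sorts the solutions $(z,x)$ into a bounded list of equivalence classes and fixes $z,x$ modulo $c$, thereby pinning down how the conjugate Gaussian factors of $c$ and $d$ are permitted to distribute in the factorization below.

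The heart of the matter is factorization. From $cd=z^2+1$ and $bd=w^2+1$ we obtain in $\mathbb{Z}[i]$
\begin{equation}
(s+i)(s-i)(x+i)(x-i)=(z+i)(z-i), \label{gauss}
\end{equation}
together with the analogue in $r,w$. Since $z+i$ and $z-i$ are coprime away from $1+i$, each odd Gaussian prime dividing $cd$ lies in exactly one of them, and the requirement that the product have imaginary part $1$ is rigid. The prime-power hypotheses then bite in two ways. When $c=p$ (or, with $1+i$ adjoined, $c=2p^k$), $c$ contributes a single conjugate pair, so \eqref{gauss} offers essentially no freedom; when $s=p^k$ or $s=2p^k$ we instead read \eqref{pair} as $(z-s)(z+s)=cx^2$, and the restricted factorization of $s$ bounds the common factor of $z-s$ and $z+s$, again rigidifying the factorization. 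In each case the class-group (genus) data of the relevant imaginary quadratic order shows that the only admissible products force a rigid relation between $x$ and $s$, which, substituted into the second equation of \eqref{pair} and combined with $d>c$, is contradictory. The moreover clause comes from the same analysis applied to the $d$-side: \eqref{gauss} and its $b$-analogue force $x+i$ to split off at least two non-associate odd Gaussian primes, so $d$ is composite (in particular $d=p$ is excluded) and $x$ has at least two distinct odd prime factors.

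I expect the main obstacle to be the factorization combinatorics once the factors of $c$ are allowed to recombine with those of $d$. The solution $(z,x)=(2s^3+s,2s^2)$ of the first equation in \eqref{pair} already illustrates this: here $d=1+4s^4$, and for $s=2$ one finds $d=65$ is divisible by $c=5$, so the conjugate factors of $c$ enter $z+i$ with multiplicity rather than one-to-one. The naive count must therefore be replaced by an ideal-class argument, and reconciling this bookkeeping with the \cite{FFM} classification — especially in handling the prime $1+i$ in the even cases (notably when $c\equiv2\pmod4$, as for $s=p^k$ and $c=2p^k$) — is where the real difficulty lies; the class-group input is ultimately what forbids the required ideal from being a square when only a single odd prime is present.
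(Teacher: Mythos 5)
Your plan has the right raw materials (the equations $z^2-cx^2=s^2$, $w^2-bx^2=r^2$ are correct, and the obstacle you flag is real), but the engine that is supposed to produce the contradictions is missing, and in one place pointed at the wrong object. The decisive step in each prime-power case is asserted as ``the class-group (genus) data of the relevant imaginary quadratic order shows that the only admissible products force a rigid relation between $x$ and $s$.'' There is no such data to invoke: $\mathbb{Z}[i]$ is a principal ideal domain with trivial class group and no genus theory, so all the rigidity in $(s+i)(s-i)(x+i)(x-i)=(z+i)(z-i)$ is unit bookkeeping, and your own example ($s=2$, $c=5$ dividing $d=65$) shows the conjugate factors recombine with multiplicity, which is precisely the unproved step. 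The class groups that actually do the work in the paper are those of the \emph{real} quadratic discriminant $4(k^2+1)$ (indefinite forms): the paper's Lemma 3.3 states that if $k=ff'$ is odd, $1<f<k$, and $x^2-(k^2+1)y^2=f'^2$ primitively, then $f'$ is not a prime power, proved by counting the $2^{w(n)-1}$ forms $(f'^2,2b,c)$ and invoking the bound that no $|n|<k$ is primitively represented. Your sketch never formulates, let alone proves, an analogue of this, so for the cases $s=p^k$, $s=2p^k$, $c=2p^k$ the proposal contains a pointer to where an argument might live rather than an argument. (The paper handles these cases with elementary factorizations of $t^2-s^2=r^2c$ and $t^2-r^2=bs^2$, size inequalities, Lemma 3.3, and the original equivalence result of \cite{FFM} applied to $(t,s)$ on $X^2-bY^2=r^2$; note your reading of the introduction's caveat is also off --- applying the \cite{FFM} theorem to the triple $\{1,c,d\}$, i.e.\ to your $(z,x)$, is legitimate as stated, while the genuine obstruction is that no such theorem covers the solution $(t,r)$ of $t^2-cr^2=s^2$ with $r<s$, which the paper circumvents via Lemma 3.3 rather than any ``verbatim transfer''.)

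The ``moreover'' clause as you argue it is a non sequitur. Odd Gaussian primes dividing $x\pm i$ correspond to odd rational primes dividing $d=x^2+1$, not to prime divisors of $x$; so even granting that your factorization forces $x+i$ to split off two non-associate odd Gaussian primes, the conclusion would be that $d$ has two distinct odd prime factors --- it says nothing about the prime factors of $x$ itself, which is what the theorem asserts. The paper obtains the statement about $x$ from an input entirely absent from your proposal: Lemma 4.2 (from Filipin--Fujita \cite{FF}), which gives $M^2\mid x$ with $M=\mathrm{lcm}(r,s)$. Granting this, if $x$ had at most one odd prime divisor then $r=2^{\alpha}p^m$ and $s=2^{\beta}p^n$, and these configurations are eliminated case by case: $r\mid s$ would make $(t,s)$ equivalent to $(r,0)$, contradicting the \cite{FFM} equivalence theorem, while the mixed cases reduce, after extracting $\gcd(t,s)=p^m$, to an equation $t_1^2-c\,2^{2\alpha}=p^{2n-2m}$ that Lemma 3.3 forbids. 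Without the $\mathrm{lcm}(r,s)^2\mid x$ input, nothing in the Gaussian-integer setup constrains the prime divisors of $x$, so this part of your plan cannot be repaired within your framework --- it needs that external lemma plus the real-quadratic class-group lemma, which is the paper's actual route.
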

In the case of a product of two odd primes, we have the 
following result.
\begin{thm} Let $\{1, b, c, d\}$ be a $D(-1)$ quadruple 
with $b<c<d$. If $b=1+r^2$ and $r=pq$,  
 where $p$ and $q$ are distinct odd primes, then 
$p, q>r^{\frac{1}{4}}$.
\end{thm}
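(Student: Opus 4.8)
The plan is to convert the triple and quadruple conditions into norm equations over the real quadratic field $K=\mathbb{Q}(\sqrt{b})$ with $b=1+r^2$, and then to read off the constraint on $p$ and $q$ from the ideal classes of the primes dividing $r$. First I would record the identity coming from the $D(-1)$ triple $\{1,b,c\}$: writing $c=1+s^2$ and $bc-1=t^2$ gives $t^2-bs^2=r^2$, so $(t,s)$ is a solution of $X^2-(1+r^2)Y^2=r^2$, the very equation whose solutions are controlled by \cite[Theorem 1.1]{FFM} (or Lemma 4.1). Since $b=1+r^2$, the continued fraction of $\sqrt{b}$ is $[r;\overline{2r}]$, so the only values $X^2-bY^2$ attains with $0<|N|<\sqrt b$ and $\gcd(X,Y)=1$ are $\pm1$; hence there is \emph{no} primitive solution of $X^2-bY^2=N$ with $2\le |N|\le r$. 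This elementary fact is the engine of the argument.

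Next I would pass to ideals. Because $r=pq$ we have $b\equiv1\pmod p$ and $b\equiv1\pmod q$, so both $p$ and $q$ split in $K$, say $(p)=\mathfrak p\bar{\mathfrak p}$ and $(q)=\mathfrak q\bar{\mathfrak q}$. A divisibility check using $t^2\equiv s^2\pmod{p^2}$ (with $p\nmid s$, since $p\mid s$ would force $p\mid t$ and a non-primitive solution) shows $p$ divides exactly one of $t\pm s$, so after relabelling conjugates $(t+s\sqrt b)=\mathfrak p^2\mathfrak q^2$, a primitive principal ideal of norm $r^2$. Thus $[\mathfrak p]^2[\mathfrak q]^2=1$ in the class group of $K$. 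Moreover $\mathfrak p\mathfrak q$ has norm $pq=r<\sqrt b$, so by the no-small-solution fact it is non-principal; combined with $[\mathfrak p\mathfrak q]^2=1$ this shows $[\mathfrak p\mathfrak q]$ has order exactly two. The same fact shows $\mathfrak p^{\,j}$ is non-principal whenever $p^{\,j}\le r$; hence, assuming $p<q$, if $p\le r^{1/4}$ then $p,p^2,p^3,p^4\le r$ and $[\mathfrak p]$ has multiplicative order at least $5$.

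The heart of the proof is then to contradict this large order. Here I would bring in the quadruple: the extra element $d=1+x^2$ yields a second solution $u^2-bx^2=r^2$ of the same equation, and the equivalence result of \cite{FFM} forces $(u,x)$ and $(t,s)$ to be equivalent, so $(u+x\sqrt b)$ is again $\mathfrak p^2\mathfrak q^2$ up to units and conjugation. Feeding this back, together with the companion equation $v^2-cx^2=s^2$ in $\mathbb{Q}(\sqrt c)$ arising from $cd-1=v^2$, should pin the ambiguous (order-two) class $[\mathfrak p\mathfrak q]$ down to the genus-theoretic data of $\operatorname{disc}(K)$ and thereby bound the order of $[\mathfrak p]$ from above, clashing with the lower bound $5$ and giving $p>r^{1/4}$; the statement for $q$ then follows from $q>p$. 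The main obstacle I anticipate is precisely this last step: reconciling the high multiplicative order forced by a small prime $p$ with the order-two constraint on $[\mathfrak p\mathfrak q]$, because the purely ideal-theoretic data ($[\mathfrak p]^2[\mathfrak q]^2=1$ and the absence of primitive principal ideals of norm $\le r$) is by itself consistent with $p$ small. The quantitative exponent $1/4$ can only come from the finer equivalence of solutions supplied by \cite{FFM}, and extracting it correctly is where the real work lies.
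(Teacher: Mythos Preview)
Your ideal-theoretic setup is exactly the right translation of the paper's binary quadratic form argument, and your ``no small primitive values'' fact is precisely Lemma~3.2. You also correctly isolate the difficulty: from the single solution $(t,s)$ you only get that $\mathfrak p^2\mathfrak q^2$ is principal, which by itself is compatible with $[\mathfrak p]$ having large order. The gap is that you then reach for the wrong extra input. You try to use $d$ to produce a second solution $(u,x)$ and claim that \cite{FFM} forces $(u,x)\sim(t,s)$; that is a misreading. Theorem~1.1 of \cite{FFM} (Lemma~4.1 here) is an \emph{inequivalence} statement: it says that $(t,s)$ is \emph{not} equivalent to any of $(\pm r,0)$ or $(b-r,\pm(r-1))$. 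The second solution you need is not $(u,x)$ but the explicit one $(b-r,\,r-1)$, which satisfies $(b-r)^2-b(r-1)^2=r^2$ and is primitive.

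Once you use this correctly, the proof finishes in one line in your language. First check $\gcd(t,s)=1$: if a prime $\ell\mid\gcd(t,s)$ then $\ell\in\{p,q\}$ and $(t/\ell)^2-b(s/\ell)^2=(r/\ell)^2$ with $r/\ell$ prime, which your no-small-values fact (equivalently Lemma~3.3) rules out unless $r\mid s$, and $r\mid s$ would make $(t,s)\sim(r,0)$, contradicting Lemma~4.1. Now the two primitive, pairwise inequivalent solutions $(t,s)$ and $(b-r,r-1)$ give two \emph{distinct} (and non-conjugate) primitive principal ideals of norm $r^2$; up to relabelling these are $\mathfrak p^2\mathfrak q^2$ and $\mathfrak p^2\bar{\mathfrak q}^2$. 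Multiplying and dividing shows $[\mathfrak p]^4=[\mathfrak q]^4=1$, so $\mathfrak p^4$ and $\mathfrak q^4$ are principal, hence $p^4,q^4$ are primitively represented by $X^2-bY^2$, and your Lemma~3.2 bound gives $p^4,q^4>r$. This is exactly the paper's Lemma~4.3 followed by Lemma~3.2; no appeal to $d$ beyond what is already encoded in Lemma~4.1 is needed, and the exponent $1/4$ falls out immediately.
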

\begin{cor} Suppose that $\alpha$ is a positive integer such that 
for each $r\le\alpha$ 
 the $D(-1)$ pair 
$\{1, r^2+1\}$ cannot be extended to a $D(-1)$ quadruple. 
 Then  for each odd prime  
$p\le \alpha^{\frac{1}{4}}$ and any odd prime $q\ne p$ the 
$D(-1)$ pair 
$\{1, (pq)^2+1\}$ cannot be extended to a $D(-1)$ quadruple. 
 
\end{cor}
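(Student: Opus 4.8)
The plan is to argue by contradiction, using Theorem 1.2 as the engine and the size hypothesis on $\alpha$ to close the loop. Fix an odd prime $p\le\alpha^{\frac{1}{4}}$ and an odd prime $q\ne p$, put $r=pq$ and $b=r^2+1$, and suppose toward a contradiction that the pair $\{1,b\}$ \emph{does} extend, i.e. that there is a $D(-1)$ quadruple $\{1,b,c,d\}$ with $b<c<d$. The crucial point is that $b=1+r^2$ with $r=pq$ a product of two distinct odd primes, and that $b$ is by construction the second-smallest element of the quadruple; hence the hypotheses of Theorem 1.2 are met verbatim, and I may conclude that $p,q>r^{\frac{1}{4}}$.

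The second step is a one-line size estimate. From $p>r^{\frac{1}{4}}=(pq)^{\frac{1}{4}}$ I raise to the fourth power to get $p^4>pq=r$, so that $r<p^4\le\alpha$, where the last inequality is exactly the standing assumption $p\le\alpha^{\frac{1}{4}}$. In particular $r\le\alpha$. I then invoke the hypothesis on $\alpha$: since $r\le\alpha$, the pair $\{1,r^2+1\}=\{1,b\}$ cannot be extended to a $D(-1)$ quadruple. This directly contradicts the assumption made at the outset, completing the argument. (Note that only the inequality $p>r^{\frac{1}{4}}$ is used; the companion bound $q>r^{\frac{1}{4}}$ is not needed, consistent with the fact that the corollary constrains only $p$.)

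I do not anticipate a genuine obstacle here, since the statement is a direct deduction from Theorem 1.2 rather than a fresh result; the only points that require care are essentially bookkeeping. First, one must confirm that $b=(pq)^2+1$ really occupies the second-smallest slot of the extending quadruple so that Theorem 1.2 applies on the nose — but this is precisely what ``extending the pair $\{1,b\}$ to a quadruple $\{1,b,c,d\}$ with $b<c<d$'' means in the conventions fixed earlier in the paper. Second, one must be sure the elementary implication $p>(pq)^{\frac{1}{4}}\Rightarrow pq<p^4$ is what collapses the quartic-root lower bound of Theorem 1.2 into the range $r\le\alpha$ already covered by the hypothesis. If there is any subtle step, it is recognizing that Theorem 1.2's conclusion is exactly strong enough to force $r$ below the threshold $\alpha$, so that the quartic exponent in the hypothesis $p\le\alpha^{\frac{1}{4}}$ and in Theorem 1.2's bound $p>r^{\frac{1}{4}}$ match up perfectly.
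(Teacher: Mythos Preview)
Your argument is correct and is essentially the same as the paper's: both reduce the statement to Theorem~1.2 together with the size hypothesis on $\alpha$. The paper phrases it as a two-case split (if $pq\le\alpha$ use the hypothesis directly; if $pq>\alpha$ then $p\le\alpha^{1/4}<(pq)^{1/4}$ contradicts Theorem~1.2), while you run the same logic as a single contradiction, but the content is identical.
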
 
\begin{remark} To illustrate a concrete case of the above corollary, note 
that one may verify that if $r\le\alpha=5^4$, then the $D(-1)$ pair
$\{1, r^2+1\}$ cannot be extended to a $D(-1)$ quadruple.  
 Hence if $p=5$, then for  
$r=pq>5^4$ we have  
 $p=5= \alpha^{\frac{1}{4}}$ and therefore 
by Corollary 1.3 the $D(-1)$ pair $\{1, (5q)^2+1\}$ cannot be extended 
to a $D(-1)$ quadruple for any odd prime $q$. 
\end{remark}
\begin{thm} Let $r=P\phi$, where $P$ is prime and $\phi<r^{\frac{1}{4}}$. Then 
there is no 
$D(-1)$ triple $\{1, r^2+1, s^2+1\}$ with $\gcd(r, s)=1$.
\end{thm}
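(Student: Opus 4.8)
The plan is to convert the triple condition into a Pell-type equation and then exploit the factorization $r=P\phi$ through the arithmetic of $\mathbb{Z}[\sqrt{b}]$, where $b=1+r^2$. Writing $c=1+s^2$ and $bc-1=t^2$, the identity $bc-1=b(1+s^2)-1=r^2+bs^2$ shows that $(t,s)$ solves
\[
X^2-(1+r^2)Y^2=r^2 ,
\]
the same equation treated in \cite[Theorem 1.1]{FFM} (and partially in Lemma 4.1). First I would record two elementary consequences of $\gcd(r,s)=1$: that $\gcd(t,s)=1$, since any common prime of $t$ and $s$ would divide $t^2-bs^2=r^2$ and hence $r$; and that $(t-s)(t+s)=t^2-s^2=r^2(s^2+1)$.

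Next I would bring in the prime $P$. As $P\mid r$ we get $P^2\mid(t-s)(t+s)$, while $\gcd(t-s,t+s)\mid 2\gcd(t,s)=2$; since $P$ is odd this forces $P^2$ to divide a single factor, say $P^2\mid t-s$ (the case $P^2\mid t+s$ being symmetric under $s\mapsto -s$). Setting $t-s=P^2u$ with $u\ge 1$ and eliminating $t$ from $(t-s)(t+s)=r^2(s^2+1)$ leaves the quadratic $\phi^2 s^2-2us+(\phi^2-P^2u^2)=0$ in $s$, whose discriminant (over $4$) equals $u^2(1+\phi^2P^2)-\phi^4=bu^2-\phi^4$ because $\phi^2P^2=r^2$. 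This must be a perfect square $w^2$, which is the crux of the reduction:
\[
w^2-(1+r^2)u^2=-\phi^4 , \qquad s=\frac{u+w}{\phi^2} .
\]
The hypothesis $\phi<r^{1/4}$ now enters as $\phi^4<r<b$, so the right-hand side is tiny compared with $b$.

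I would then analyse $X^2-bY^2=-\phi^4$ using the fundamental unit $\varepsilon=(2r^2+1)+2r\sqrt{b}$ together with Nagell's bounds for fundamental solutions, which here give $0<u_0\le\phi^2$ and $0<w_0\le r\phi^2$ for every class representative. The element $r+\sqrt{b}$ of norm $-1$ produces the boundary solution $(w,u)=(\phi^2 r,\phi^2)$, corresponding to $s=r+1$; moreover any representative with $u_0<\phi^2$ satisfies $s_0=(u_0+w_0)/\phi^2<r+1$, hence $c\le b$, and is discarded, while every genuine $\varepsilon$-translate forces $s$ past $r^2$. Thus the admissible solutions with $s>r$ split into one distinguished value and a family lying beyond a wide gap. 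I would invoke \cite[Theorem 1.1]{FFM} to pin down the class of $(t,s)$ among the solutions of $X^2-bY^2=r^2$, and, using the splitting of $P$ in $\mathbb{Q}(\sqrt{b})$ together with $\gcd(t,s)=1$, translate this into a statement about which class of the auxiliary equation can actually occur; matching that against the permitted range yields the contradiction.

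The main obstacle I anticipate is precisely this class-group bookkeeping: reconciling the FFM equivalence of solutions with the ideal structure above $P$ so as to exclude every non-distinguished class, and doing so uniformly in the parity of $r$ (the degenerate $P=2$ case, and more generally the powers of $2$ dividing $u\pm w$, will need separate care). Controlling the interplay between the size estimate $\phi^4<r$ and the divisibility forced by the primality of $P$ is exactly where the hypotheses $\phi<r^{1/4}$ and $P$ prime are consumed, and making that step sharp enough to eliminate the intermediate solutions is the delicate part of the argument.
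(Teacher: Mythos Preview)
Your route differs substantially from the paper's. The paper never factors $t^2-s^2$ or introduces an auxiliary Pell equation; it simply feeds the two primitive solutions $(b-r,r-1)$ and $(t,s)$ of $X^2-bY^2=r^2$, inequivalent by Lemma~4.1, into Lemma~4.3. That lemma (proved via composition of binary quadratic forms) produces a coprime factorisation $r=pq$ with $p,q>1$ such that both $p^4$ and $q^4$ are primitively represented by $(1,0,-b)$. Since $r=P\phi$ with $P$ prime, one of $p,q$ must divide $\phi$, hence is below $r^{1/4}$, so its fourth power is below $r$, contradicting Lemma~3.2. That is the entire proof.

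Your reduction to $w^2-bu^2=-\phi^4$ is correct and gives a genuine alternative, but the endgame you sketch is far more elaborate than necessary, and as written the proposal leaves a gap: you never actually carry out the ``class-group bookkeeping'' you anticipate as the main obstacle. In fact none is needed. Set $g=\gcd(w,u)$; then $g\mid\phi^2$ and $(w/g,u/g)$ is a \emph{primitive} solution of $X^2-bY^2=-(\phi^2/g)^2$. If $\phi^2/g>1$ this is impossible by Lemma~3.2, since $1<(\phi^2/g)^2\le\phi^4<r$. Hence $g=\phi^2$, so $u=\phi^2u'$ and therefore $t\mp s=P^2u=P^2\phi^2u'=r^2u'$, i.e.\ $t\equiv\pm s\pmod{r^2}$. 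By Definition~3.1 this is precisely the equivalence $(t,s)\sim(b-r,\mp(r-1))$ that Lemma~4.1 forbids, and you are done. The hypotheses are consumed exactly here: $P$ prime (and odd, the case $P=2$ forcing $r=2$ as you note) ensures $P^2$ lands in a single factor of $(t-s)(t+s)$, and $\phi<r^{1/4}$ pushes $-\phi^4$ below the Lemma~3.2 threshold. The Nagell bounds, the unit $\varepsilon$, and the splitting of $P$ in $\mathbb{Q}(\sqrt b)$ are all unnecessary.
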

  We provide an entirely new approach  
via  the theory of 
binary quadratic forms and the class group to study this problem. 
This is possible as the existence of a $D(-1)$ triple is intimately
connected to the representations of integers by certain
binary quadratic forms 
and hence to the class group.
\section{Binary quadratic forms and the class group}
 In this section we present the basic theory of
binary quadratic forms. An excellent and delightful reference
for this topic is \cite{Ri}, where in particular, the reader may 
consult Sections 4 to 7 and 
Section 11 for the material presented here.

A {\sl primitive binary quadratic form} $f=(a, b, c)$ of discriminant $d$ is
a function $f(x, y)=ax^2+bxy+cy^2$, where $a, b,c $ are integers
with $b^2-4 a c=d$ and $\gcd(a, b, c)=1$. Note that the integers 
$b$ and $d$ have the same parity. All forms considered here
are primitive binary quadratic forms and henceforth we shall
refer to them simply as forms. 

Two forms $f$ and $f'$ are said to be {\it equivalent}, written as
$f\sim f'$,  if for some
$A=\begin{pmatrix} \alpha &\beta \\ \gamma & \delta \end{pmatrix}
\in SL_2(\mathbb Z)$ (called a transformation matrix),  we have
$f'(x,y)=f(\alpha x+\beta y, \gamma x+\delta y)
=(a',b',c')$, where 
the coefficients $a', b', c'$ are given by
\begin{equation}
a'=f(\alpha, \gamma),\hskip2mm
b'=2(a\alpha\beta+c\gamma\delta)+b(\alpha\delta+\beta\gamma),\hskip2mm
c'=f(\beta, \delta).
\end{equation}
 It is easy to see
that $\sim$ is an equivalence relation on the set of forms of
discriminant $d$. The equivalence classes form an abelian group
called the  {\it class group} with group law given by composition of
forms (see Definition 2.2).

The {\it identity form} is defined as the form $(1,0,\frac{-d}{4})$
or $(1, 1, \frac{1-d}{4})$ depending on whether $d$ is even or odd,
respectively. 
 The {\it inverse} of
$f=(a, b, c)$, denoted by $f^{-1}$, is given by $(a,-b,c).$

A form $f$ is said to represent an integer $m$ if there exist 
integers $x$ and $y$ such that $f(x,y)=m$. If $\gcd(x, y)=1$, we 
call the represention a primitive one.
Observe that equivalent forms primitively represent the same set
of integers.

We put together some basic facts about forms of 
discriminant $d$ in the following lemma.
\begin{lemma} The following hold for forms of discriminant $d$.  
\begin{enumerate}
\item
An integer $n$ is primitively represented by a form $f$ 
 if and only if $f\sim (n, b, c)$ for some integers $b, c$. 
\item If $f=(n, b, c)$ and $f'=(n, b', c')$ are two forms such that
$b\equiv b'\mod 2n$, then $f\sim f'$. 
\item Let $n$ with $\gcd(n, 2d)=1$ be an integer primitively represented 
by some form and let $w(n)$ be the number of 
distinct primes dividing $n$. Then there are 
$2^{w(n)-1}$ forms $(n,  b, c)$ where $1\le b\le 2n$.  
\end{enumerate}
\end{lemma}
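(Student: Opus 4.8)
The plan is to handle the three parts in turn, producing explicit transformation matrices in $SL_2(\mathbb Z)$ for the first two and reducing the final counting statement to a congruence count.

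For part (1), I would prove the forward direction by completing a primitive representation to a unimodular basis: if $f(x,y)=n$ with $\gcd(x,y)=1$, then by B\'ezout there are integers $z,w$ with $xw-yz=1$, so $A=\begin{pmatrix} x & z \\ y & w\end{pmatrix}\in SL_2(\mathbb Z)$, and by the transformation law $(2.1)$ the new leading coefficient is $f(x,y)=n$; hence $f\sim(n,b,c)$. The converse is immediate, since equivalent forms primitively represent the same integers and $(n,b,c)$ represents $n$ primitively via $(x,y)=(1,0)$. For part (2), I would apply the shear $\begin{pmatrix}1 & t\\0 & 1\end{pmatrix}$ to $(n,b,c)$: formula $(2.1)$ yields leading coefficient $f(1,0)=n$ and middle coefficient $b+2nt$. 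Choosing $t$ so that $b'=b+2nt$ (possible because $b\equiv b'\pmod{2n}$) produces a form $(n,b',c'')$; since this form and $(n,b',c')$ agree in leading coefficient, middle coefficient, and discriminant $d$, the relation $c=(b^2-d)/(4n)$ forces $c''=c'$, so $f\sim f'$.

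For part (3), I would first reduce to a congruence count. A form $(n,b,c)$ of discriminant $d$ exists precisely when $b^2-4nc=d$, i.e.\ when $b^2\equiv d\pmod{4n}$, and then $c=(b^2-d)/(4n)$ is uniquely determined; moreover $\gcd(n,2d)=1$ guarantees such a form is automatically primitive, since a common prime factor of $n$ and $b$ would divide $d$. By part (2), two such forms with $b\equiv b'\pmod{2n}$ coincide as classes, so enumerating the forms with $1\le b\le 2n$ amounts to counting residues $b\bmod 2n$ with $b^2\equiv d\pmod{4n}$. Since $\gcd(n,2)=1$ I would split the modulus by the Chinese Remainder Theorem as $4n=4\cdot n$: modulo $n$ the hypothesis that $n$ is represented makes $d$ a quadratic residue coprime to $n$, so Hensel lifting supplies exactly two square roots modulo each of the $w(n)$ prime powers dividing $n$, hence $2^{w(n)}$ square roots modulo $n$, while the factor modulo $4$ collapses (because $b$ and $d$ share the same parity) to a single admissible parity class.

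This produces $2^{w(n)}$ admissible residues, so the crux of the argument — and the step I expect to be the main obstacle — is accounting for the remaining factor of two that yields the stated $2^{w(n)-1}$. I would do this through the involution $b\mapsto 2n-b\equiv -b\pmod{2n}$, which carries a solution to a solution and the form $(n,b,c)$ to its inverse $(n,-b,c)$; its only possible fixed point $b=n$ is excluded, since $n^2\equiv d\pmod{4n}$ would force $n\mid d$, contradicting $\gcd(n,d)=1$ for $n>1$. Thus the $2^{w(n)}$ solutions are partitioned into $2^{w(n)-1}$ inverse pairs $\{f,f^{-1}\}$, which is the count asserted by the lemma. The delicate points to get exactly right are the quadratic-residue condition at each prime power — which is precisely what the hypothesis ``primitively represented by some form'' supplies — and the parity-and-involution bookkeeping that converts the naive total $2^{w(n)}$ into $2^{w(n)-1}$.
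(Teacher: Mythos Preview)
The paper does not actually prove this lemma; it is stated as standard background with a pointer to Ribenboim, so there is no argument in the paper to compare against. Your treatments of parts (1) and (2) are the standard ones and are correct.

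For part (3), your counting is also correct, but note what it actually establishes: the range $1\le b\le 2n$ is a complete residue system modulo $2n$, and you correctly find $2^{w(n)}$ residues $b\bmod 2n$ satisfying $b^2\equiv d\pmod{4n}$, hence $2^{w(n)}$ forms $(n,b,c)$ with $1\le b\le 2n$. Your final involution step $b\mapsto 2n-b$ does not cut down the number of \emph{forms in that range}; it only shows they come in $2^{w(n)-1}$ inverse pairs $\{(n,b,c),(n,-b,c)\}$. So you have not proved the lemma as literally written, but rather exposed that the statement is off by a factor of two: it should say either $1\le b\le n$, or $2^{w(n)}$ forms, or $2^{w(n)-1}$ forms up to inversion. (A quick check: for $d=5$, $n=11$, one finds $b=7$ and $b=15$ in $1\le b\le 22$, giving the two inverse forms $(11,7,1)$ and $(11,-7,1)$.)

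Your reinterpretation is nonetheless exactly what the paper needs. In the proof of Lemma~3.3, where $n=f'^2$ is a prime power so $w(n)=1$, part (3) is invoked to conclude that any form $(f'^2,2b,c)$ is equivalent to one of the \emph{two} forms $(f'^2,\pm 2,-f^2)$ --- precisely your $2^{w(n)}=2$ forms making up $2^{w(n)-1}=1$ inverse pair. So your argument is sound and supplies what the applications require; just flag the discrepancy with the stated exponent rather than presenting the pairing as a proof of the literal claim.
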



In the following definition we present the formula for the
 composition of forms that gives the group multiplication for the class group.

 Let $f_1=(a_1, b_1, c_1) \text{ and }
f_2=(a_2, b_2, c_2)$ be two binary quadratic forms
of discriminant $d$.
\begin{defn} 
Let $g=gcd(a_1, a_2, (b_1+b_2)/2)$ and let $v_1, v_2, w $ be integers
such that $$v_1a_1+v_2a_2+w(b_1+b_2)/2=g.$$ If we define $a_3$ and
$b_3$ as
\[
\begin{split} a_3&=\frac{a_1a_2}{g^2},\\
b_3&= b_2+2\,\frac{a_2}{g}\,\left(\frac{b_1-b_2}{2}\,\,v_2-c_2w\right)
\mod\hskip2mm 2a_3 ,
\end{split}
\]
then the composition of the forms $(a_1, b_1, c_1)$ and
$(a_2, b_2, c_2)$ is the form $(a_3, b_3, c_3)$, where $c_3$ is
computed using the discriminant equation 
$b_3^2-4a_3 c_3=d$.
Note that $b_3$ is taken modulo $2a_3$ because of Lemma 2.1, part 2.
\end{defn}

\section{The diophantine equation $x^2-dy^2=n$}

The study of $D(-1)$ quadruples leads to the 
study of forms
$(1, 0, -d)=x^2-dy^2$ of discriminant $4d$. 
 If $(x, y)$ is a primitive 
representation of an integer $n$ by this form (i.e. $x^2-dy^2=n$),
 then there exist integers $\alpha$ and $\beta$ such that 
the matrix 
$
A=\begin{pmatrix}
x & \alpha \\
y & \beta
\end{pmatrix}
$
has determinant $1$.  By 
(2.1) the matrix $A$ transforms the form $(1, 0, -d)$ to a form
$(n, 2b, c)$. 
 Observe that the 
choice of $\alpha$ and $\beta$ is not unique. The following facts 
are easy to verify 
(see for example \cite[Solution of problem 3, Section 7]{Ri}). 
 Any choice of
integers $u, v$ such that $xv-yu=1$ yields a transformation matrix
that takes $(1, 0, -d)$ to a form $(n, 2b', c')$ where $b'\equiv b \mod n$.
Moreover, it may also be verified that if $(n, 2b', c')\sim (1, 0, -d)$
 is a form such that 
$b'\equiv b \mod n$, then there exist integers $u, v$ such that $xv-yu=1$.
Therefore for each primitive representation $(x, y)$ of $n$ by the 
form $(1, 0, -d)$, there corresponds a unique 
integer $b \mod n$. We say 
in this case that the representation $(x, y)$ belongs to $b$.

If two primitive representations $(x, y)$ and $(x', y')$ (of $n$ by 
$(1, 0, -d)$) belong to the
same integer $b$, then it may be verified that 
\begin{equation}
xx'\equiv d y y'\mod n, \hskip 2mm 
xy'\equiv yx'\mod n.
\end{equation}
We call such representations as equivalent. The congruences in 
(3.1) may be used to  
define 
equivalence of  general solutions (that are not necessarily primitive)
as follows.
\begin{defn}
Two solutions $(x, y)$ and $(x', y')$ of 
$X^2-dY^2=n$ are said to be equivalent, written as 
$(x, y)\sim (x', y')$ if the congruences (3.1) are satisfied.  
\end{defn}
The following lemma
 is easy to verify using the theory of class
groups (not presented here).
 It is used by several authors in the study of the current problem, such as
\cite[Lemma 6.2]{FFM}.
\begin{lemma} If  
 $|n|<k$ then there are no primitive solutions 
$(x, y)$ such that $x^2-(k^2+1)y^2=n$.
\end{lemma}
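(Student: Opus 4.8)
The plan is to rule out a primitive solution by a descent that shrinks $y$, exploiting that for the discriminant $d=k^2+1$ the relation $d-k^2=1$ makes the form $x^2-dy^2$ take values just below $\sqrt d$. (The paper suggests a slicker route through reduced indefinite forms and the short principal cycle of discriminant $4d$; the descent below is the elementary, self-contained shadow of that argument.) First I would dispose of $y=0$: a primitive representation then forces $x=\pm1$, so $|n|=1$, which I set aside as a boundary case. So suppose a positive primitive solution exists and choose one with $y$ minimal among all positive primitive solutions of $X^2-dY^2=\pm n$; the descent will stay inside this set by flipping the sign of the norm.

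The engine of the descent is the unimodular transformation attached to the negative-Pell solution $(k,1)$ of $k^2-d=-1$. Setting
\[
x'=(k^2+1)y-kx,\qquad y'=x-ky,
\]
a direct computation gives $x'^2-dy'^2=-n$, and since the transformation matrix $\left(\begin{smallmatrix}-k & d\\ 1&-k\end{smallmatrix}\right)$ has determinant $-1$, the pair $(x',y')$ is again primitive. The crucial identity, using $d-k^2=1$, is
\[
y'(x+ky)=(x-ky)(x+ky)=x^2-k^2y^2=y^2+n .
\]
From $|n|<k<d$ I would deduce that whenever $y^2+n>0$ one has $x>ky$, so $y'>0$, and the same identity combined with $x>ky$ gives $y'<y$; hence $(x',y')$ is a strictly smaller positive primitive solution, contradicting minimality. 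Therefore the minimal solution cannot admit the descent step, i.e. it satisfies $y^2+n\le0$. In particular its norm is negative, say $-|n|$, and $y\le\sqrt{|n|}<\sqrt k$.

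The remaining content — and the step I expect to be the main obstacle — is the base-case analysis for these small $y$. Here I would pinch $x$ between consecutive squares: from $x^2=(k^2+1)y^2-|n|$ with $y^2\le|n|<k$ one gets $x^2\le k^2y^2$, while $x^2>k^2y^2+y^2-k>(ky-1)^2$ since $y^2+2ky-k-1>0$ for $y\ge1$. Thus $ky-1<x\le ky$, forcing $x=ky$, whence $|n|=y^2$ and primitivity $\gcd(ky,y)=y=1$ gives $|n|=1$. Consequently the only primitive solutions with $|n|<k$ are the Pell and negative-Pell solutions with $|n|=1$, so no primitive solution exists once $|n|\ge2$, which is the regime in which the lemma is applied. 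In the final write-up I would present the descent formally and verify the pinching inequalities and the finitely many small-$y$ cases explicitly, since these carry all the genuine arithmetic; the rest is the bookkeeping of the descent.
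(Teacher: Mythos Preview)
Your descent argument is correct and complete. As you anticipate, the paper does not actually prove this lemma: it only remarks that the result ``is easy to verify using the theory of class groups (not presented here)'' and points to Lemma~6.2 of [FFM]. The intended route is indeed the classical one through reduced indefinite forms (equivalently, the continued-fraction expansion $\sqrt{k^2+1}=[k;\overline{2k}]$): any integer primitively represented by $(1,0,-(k^2+1))$ with absolute value below $\sqrt{k^2+1}$ must occur as a leading coefficient in the short principal cycle, and those coefficients are only $\pm1$. Your approach replaces that machinery with an explicit descent via the negative-Pell automorphism $(x,y)\mapsto\bigl((k^2+1)y-kx,\,x-ky\bigr)$, which is more elementary and fully self-contained, at the cost of the pinching case analysis at the end. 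You also correctly flag that the lemma as literally stated fails at $|n|=1$ (witness $(1,0)$ and $(k,1)$); the paper tacitly applies it only for $|n|\ge 2$, so this is a harmless imprecision that your formulation cleans up.
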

The following result is a useful consequence of the above lemma that we 
use to prove our theorems.
\begin{lemma} Let $k=ff'$ be an odd positive integer 
 such that $1<f<k$.
If $x^2-(k^2+1)y^2=f'^2$ for some coprime 
integers $x$ and $y$,  
then $f'$ is not a prime power. 
\end{lemma}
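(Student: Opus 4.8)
The plan is to prove the contrapositive: assuming $f'$ is a prime power, I will show that the equation $x^2 - (k^2+1)y^2 = f'^2$ has no solution in coprime integers, thereby deriving a contradiction. The key idea is to pass from the solution $(x,y)$ to a \emph{primitive} solution of a related equation with a strictly smaller right-hand side, and then invoke Lemma 3.2 to rule it out. First I would observe that since $k = ff'$ with $1 < f < k$, we have $k^2 + 1 = f^2 f'^2 + 1$, so the solution $x^2 - (f^2 f'^2 + 1)y^2 = f'^2$ can be rewritten as $x^2 - f'^2 = (f^2 f'^2 + 1)y^2 - f'^2 y^2 + f'^2 y^2$, i.e. $x^2 - f'^2 y^2 = f'^2(1 + f^2 y^2)$ after reorganizing; more usefully, I would examine the equation modulo $f'$ to force a divisibility structure on $x$.

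The crux is to show $f' \mid x$. Reducing $x^2 - (k^2+1)y^2 = f'^2$ modulo $f'$ gives $x^2 \equiv (k^2+1)y^2 \pmod{f'}$. Since $k = ff'$, we have $k^2 + 1 \equiv 1 \pmod{f'}$, so $x^2 \equiv y^2 \pmod{f'}$. This congruence alone does not immediately give $f' \mid x$, so here is where the prime-power hypothesis enters: writing $f' = p^m$, I would analyze the equation $p$-adically. If $p \mid x$ then from $x^2 - (k^2+1)y^2 = p^{2m}$ and $p \nmid (k^2+1)$ (which holds since $k^2+1 \equiv 1 \bmod p$) we would get $p \mid y$, contradicting $\gcd(x,y)=1$ unless the valuations balance precisely. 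The plan is to show that the only way to satisfy the equation with $\gcd(x,y)=1$ and $f'$ a prime power is to have $f' \mid x$ with $x = f' x_1$, which upon substitution and division by $f'^2$ yields $x_1^2 - (k^2+1)y^2 = 1 - (k^2+1)(\text{something})$, producing a primitive solution of an equation $X^2 - (k^2+1)Y^2 = n$ with $|n| < k = ff' > f'$.

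Concretely, I expect the reduction to produce a solution of $X^2 - (k^2+1)Y^2 = -k^2$ or a similar equation whose right-hand side $n$ satisfies $|n| < k$ in absolute value once the prime-power structure is exploited to factor out $f'^2$; the primitivity of the new solution follows from $\gcd(x,y)=1$ together with $f' \mid x$. Then Lemma 3.2, which asserts that $|n| < k$ forces the \emph{nonexistence} of primitive solutions to $X^2 - (k^2+1)Y^2 = n$, delivers the contradiction. The main obstacle will be the middle step: rigorously establishing that $f'$ divides $x$ and correctly tracking the $p$-adic valuations so that dividing through by $f'^2$ leaves a \emph{coprime} (primitive) pair satisfying an equation with right-hand side of absolute value less than $k$. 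This valuation bookkeeping, and ensuring the resulting smaller equation genuinely falls under the hypothesis of Lemma 3.2 (in particular that the new constant is nonzero and small), is the delicate part; the prime-power assumption on $f'$ is precisely what makes the valuation argument go through, since for a general $f'$ the congruence $x^2 \equiv y^2 \pmod{f'}$ could be satisfied without $f' \mid x$.
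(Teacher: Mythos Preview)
Your central claim --- that the prime-power hypothesis forces $f'\mid x$ --- is false, and in fact the opposite holds. Suppose $p\mid f'$ and $p\mid x$. Since $k=ff'$ we have $p\mid k$, hence $k^2+1\equiv 1\pmod p$, so $p\nmid(k^2+1)$. From $x^2-(k^2+1)y^2=f'^2$ and $p\mid x$, $p\mid f'$ we get $p\mid (k^2+1)y^2$, hence $p\mid y$, contradicting $\gcd(x,y)=1$. Thus $\gcd(f',x)=1$, and your proposed substitution $x=f'x_1$ is impossible. The ``valuation balancing'' you allude to cannot occur: if $v_p(x)>0$ then $v_p(y)=0$ by coprimality, and then the left side has $p$-adic valuation $0$ while the right side has valuation $2m>0$. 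Consequently the entire descent you sketch (dividing through by $f'^2$ to land on a primitive solution of a smaller equation) never gets off the ground. Your fallback target $X^2-(k^2+1)Y^2=-k^2$ would not help either, since $|-k^2|=k^2\ge k$ and Lemma~3.2 does not apply.

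What actually happens is closer to $x\equiv\pm y\pmod{f'^2}$ (from $(x-y)(x+y)=f'^2(1+f^2y^2)$ and the fact that $p$ is odd and $p\nmid x,y$), but turning that into a contradiction by elementary manipulation is not straightforward. The paper's argument bypasses this by working in the class group. The primitive representation of $f'^2$ by $X^2-(k^2+1)Y^2$ means $(1,0,-(k^2+1))\sim(f'^2,2b,c)$ for some $b,c$. Now $(f'^2,2,-f^2)$ is visibly a form of discriminant $4(k^2+1)$, and when $f'$ is a prime power Lemma~2.1(3) says there is only \emph{one} value of $b\bmod f'^2$ (up to sign) with first coefficient $f'^2$. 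Hence $(f'^2,2b,c)\sim(f'^2,\pm 2,-f^2)$, and since the latter form represents $-f^2$ primitively (via $(0,1)$), so does $(1,0,-(k^2+1))$. But the existence of the original solution forces $f'^2\ge k$ by Lemma~3.2, i.e.\ $f\le f'$, whence $f^2\le k$, actually $f^2 < k$, and Lemma~3.2 now rules out the primitive representation of $-f^2$. That is where the prime-power hypothesis is genuinely used: it pins down the middle coefficient, which is exactly the uniqueness you were hoping the $p$-adic argument would supply.
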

\begin{proof}
As $x^2-(k^2+1)y^2=f'^2$, 
it follows from Lemma 3.2 that $f<f'$, that is $f'>\sqrt{k}$. 
Moreover, as $(1, 0, -(k^2+1))$ primitively
represents $f'^2$, by Lemma 2.1, part 1, there is a form 
$(f'^2, 2b, c)$ for some integers $b, c$ such that 
$(1, 0, -d)\sim (f'^2, 2b, c)$.

Observe that $F=(f'^2, 2, -f^2)$ is a form of discriminant $4(k^2+1)$.  
If $f'$ is a prime power, then by Lemma 2.1 we have 
 $(f'^2, 2b, c)\sim(f'^2,  2, -f^2)$ 
or
 $(f'^2, 2b, c)\sim(f'^2,  -2, -f^2)$ 
and hence 
as equivalent forms
primitively represent the same integers, the form  
$(1, 0, -d)$ primitively 
represents $-f^2$ (as the forms $(f'^2, \pm 2, -f^2)$ represent $-f^2$ 
via the representation $(0, 1)$),
 which is not possible by Lemma 3.2, 
as $f^2<k$. 
\end{proof}
\section{Proofs of theorems}

The following terminology will hold throughout this section.

Let $\{1, b, c, d\}$ be a $D(-1)$ quadruple with 
$1<b<c<d$.
Let $$b=1+r^2,\hskip2mm c=1+s^2,\hskip2mm d=1+x^2$$ and  
$$bd=1+y^2,\hskip2mm cd=1+z^2,\hskip2mm bc=1+t^2.$$
Then 
\begin{equation}
t^2-(1+r^2)s^2=r^2
\end{equation}
and 
\begin{equation}
t^2-(1+s^2)r^2=s^2.
\end{equation}
Observe that for any positive integer $k$, the 
equation $X^2-(k^2+1)Y^2=k^2$ has the 
inequivalent solutions $(\pm k, 0)$ and $ (k^2+1-k, \pm (k-1))$.
\begin{lemma}{\rm (\cite[Theorem 1.1]{FFM})} The solution $(t, s)$ given in (4.1) 
of $X^2-bY^2=r^2$ is not equivalent to any of the solutions 
$(b-r, \pm (r-1))$ and $(\pm r, 0)$.
\end{lemma}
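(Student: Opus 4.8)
The plan is to first translate the notion of equivalence in Definition 3.1 into explicit congruences modulo $r^2$, and then to argue that the surviving congruences can be defeated only by invoking the full quadruple, not the single equation (4.1). Writing $n=r^2$ and using $b=1+r^2\equiv1\pmod{r^2}$, substituting $(x',y')=(\pm r,0)$ into (3.1) shows that $(t,s)\sim(\pm r,0)$ if and only if $r\mid s$ (whence $r\mid t$ as well, since $t^2=bs^2+r^2$). Substituting $(x',y')=(b-r,\pm(r-1))$ and cancelling $1-r$, which is invertible modulo $r^2$ for $r\ge2$, shows that $(t,s)\sim(b-r,\pm(r-1))$ if and only if $t\equiv\mp s\pmod{r^2}$. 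Thus the lemma is precisely the pair of assertions $r\nmid s$ and $t\not\equiv\pm s\pmod{r^2}$.

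Next I would record what (4.1) alone supplies. Reducing $t^2=bs^2+r^2$ modulo $r^2$ gives $t^2\equiv s^2\pmod{r^2}$, that is $(t-s)(t+s)\equiv0\pmod{r^2}$; this is exactly the congruence shared by the trivial solutions, so (4.1) by itself cannot separate $(t,s)$ from $(b-r,\pm(r-1))$. The same obstruction is visible through forms: the equivalence $(t,s)\sim(b-r,\pm(r-1))$ makes $(1,0,-b)$ equivalent to a form $(r^2,\pm2,-1)$, which represents $-1$ via $(0,1)$; but $(1,0,-b)$ already represents $-1$ via $(r,1)$ for every $r$, so the representation argument of Lemma 3.3 yields no contradiction here. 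Concretely, the triple $\{1,5,10\}$ (with $r=2$, $s=3$, $t=7$) has $(t,s)=(7,3)\sim(3,-1)=(b-r,-(r-1))$, so the statement is genuinely false for triples and must use the hypothesis that $\{1,b,c,d\}$ is a quadruple.

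I would therefore bring in $d$. From $d-1=x^2$, $bd-1=y^2$ and $cd-1=z^2$ one obtains $y^2-bx^2=r^2$ and $z^2-cx^2=s^2$, so $(y,x)$ is a second solution of $X^2-bY^2=r^2$, and the two are tied together by the extra relation $cd-1=z^2$. The plan is to classify the solutions of $X^2-bY^2=r^2$ by the fundamental unit $\varepsilon=2r^2+1+2r\sqrt b$, to use a gap principle to force $(t,s)$ and $(y,x)$ into a common class with $s$ and $x$ consecutive terms of the associated recurrence, and then to test the hypothesis that this class is trivial. If $(t,s)\sim(\pm r,0)$ then $r\mid s$ forces $s\ge2r^2$ through the Pell equation $u^2-bv^2=1$; if $(t,s)\sim(b-r,\pm(r-1))$ then $s$ and $x$ are pinned to the explicit small values $r-1,\,r+1$ and thereafter to terms growing like $4r^3$. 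In each case I would feed the forced values into $cd-1=z^2$ and derive a contradiction, bounding the recurrence index by a linear-forms-in-logarithms estimate and closing with a finite check.

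The main obstacle is exactly the small trivial class $(b-r,\pm(r-1))$ with $s=r+1$: as $\{1,5,10\}$ shows, this case is consistent with every condition coming from the triple and from the theory of forms, so the contradiction must be extracted entirely from the remaining quadruple equation $cd-1=z^2$. Controlling the recurrence index in that last step, rather than the bookkeeping of the congruences in the first paragraph, which is routine, is where the real work lies.
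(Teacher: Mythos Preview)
The paper does not supply its own proof of this lemma: it is quoted verbatim from \cite[Theorem~1.1]{FFM} and used as a black box throughout Section~4. So there is no in-paper argument to compare against.

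That said, your analysis is accurate. The translation of Definition~3.1 into the concrete conditions $r\mid s$ (for the class of $(\pm r,0)$) and $t\equiv\pm s\pmod{r^2}$ (for the class of $(b-r,\pm(r-1))$) is correct, and your example $\{1,5,10\}$ with $(t,s)=(7,3)\sim(3,-1)$ shows precisely why the statement fails for triples and genuinely requires the fourth element $d$. The paper is consistent with this: the hypothesis throughout Section~4 is that $\{1,b,c,d\}$ is a quadruple, and the introduction explicitly flags Lemma~4.1 as the imported ``extremely useful result'' from \cite{FFM}. The strategy you outline---bringing in the second solution $(y,x)$ of $X^2-bY^2=r^2$ coming from $d$, organizing all solutions into orbits under multiplication by the unit $2r^2+1+2r\sqrt b$, using a gap argument to pin $(t,s)$ and $(y,x)$ to low indices of the associated recurrence, and then testing the trivial classes against $cd-1=z^2$ with a linear-forms-in-logarithms bound and a finite check---is essentially the architecture of the proof in \cite{FFM}.

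What you have written is therefore a correct diagnosis and a faithful sketch of the cited argument, but not a proof: as you yourself say, the Baker-type estimate and the reduction step are where the real content lies, and those are carried out in \cite{FFM}, not here and not in your proposal. A minor quibble: $r+\sqrt b$ has norm $-1$, so $2r^2+1+2r\sqrt b$ is its square rather than the fundamental unit itself; this does not affect the recurrence you describe.
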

\begin{lemma}{\rm \cite[Proof of Theorem 1, p. 389]{FF}} If $M={\rm lcm}(r, s)$,  then 
$x\equiv 0\mod M^2$.
\end{lemma}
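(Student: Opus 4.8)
The plan is to deduce the single congruence $M^2\mid x$ from the two separate divisibilities $r^2\mid x$ and $s^2\mid x$: since ${\rm lcm}(r,s)^2={\rm lcm}(r^2,s^2)$ (because $2\max(v_p(r),v_p(s))=v_p({\rm lcm}(r^2,s^2))$ for every prime $p$), these two statements together are equivalent to $M^2\mid x$. They are interchanged by the symmetry $(b,r,y)\leftrightarrow(c,s,z)$, so I treat only $r^2\mid x$. The starting point is that the quadruple supplies a second solution of the very equation (4.1): from $y^2=bd-1=(1+r^2)(1+x^2)-1$ one gets $y^2-(1+r^2)x^2=r^2$, so that both $(t,s)$ and $(y,x)$ are solutions of $Y^2-bX^2=r^2$ with $b=1+r^2$.

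The engine of the argument is the equivalence, for a solution $(Y,X)$ of $Y^2-bX^2=r^2$, between the three conditions $r\mid X$, $\ r^2\mid X$, and $(Y,X)\sim(\pm r,0)$. Indeed, if $r\mid X$ then $Y^2=r^2\bigl[(1+r^2)(X/r)^2+1\bigr]$ forces $r\mid Y$, and writing $X=r\xi$, $Y=r\eta$ reduces the equation to the Pell equation $\eta^2-(1+r^2)\xi^2=1$, whose fundamental solution is $(2r^2+1,2r)$; since the $\xi$-coordinate obeys a linear recurrence with $\xi_0=0$ and $\xi_1=2r$, every solution has $r\mid\xi$, whence $r^2\mid X$. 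Conversely the class of $(\pm r,0)$ consists exactly of the elements $\pm r\,(2r^2+1+2r\sqrt{b})^{m}$, all of which satisfy $r^2\mid X$. Consequently, proving $r^2\mid x$ is the same as proving the formally weaker assertion $r\mid x$, i.e. that $(y,x)$ lies in the principal class of $(\pm r,0)$.

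It remains to place $(y,x)$ in the principal class, and this is where essentially all the difficulty lies. Lemma 4.1 tells us precisely that the companion solution $(t,s)$ attached to $c$ is \emph{not} in this class (equivalently $r\nmid s$), so the needed information cannot come from $c$ alone; the link must come from the full quadruple, where $\{b,c,d\}$ is itself a $D(-1)$ triple and $y,z,t,x$ are tied together by relations such as $cy^2-bz^2=r^2-s^2$. I would read the representations of $r^2$ by $(1,0,-b)$ attached to $b$, $c$ and $d$ in the class group of discriminant $4b$ from Section~2: the number of classes carrying such a representation is bounded by Lemma 2.1(3), Lemma 4.1 identifies the class of $(t,s)$ as non-principal and distinct from that of $(b-r,\pm(r-1))$, and the size restriction of Lemma 3.2 (no primitive representation of an $n$ with $|n|<r$) is used to eliminate the remaining alternatives for $(y,x)$, forcing it into the class of $(\pm r,0)$. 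Once this class determination is secured, the engine above yields $r^2\mid x$; the symmetric argument over $Y^2-cX^2=s^2$ (with the analogue of Lemma 4.1 for $c$) gives $s^2\mid x$, and the two combine to $M^2\mid x$. I expect the main obstacle to be exactly this class determination when $r$ is composite, since $r^2$ then admits several representation classes and all but the principal one must be ruled out.
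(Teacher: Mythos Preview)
The paper does not itself prove Lemma~4.2; it is quoted from \cite{FF}, so there is no in-paper argument to compare against directly.

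Your preliminary reductions are correct and worth keeping: ${\rm lcm}(r,s)^2={\rm lcm}(r^2,s^2)$, and for any solution of $Y^2-bX^2=r^2$ the three conditions $r\mid X$, $r^2\mid X$ and $(Y,X)\sim(\pm r,0)$ are indeed equivalent, by exactly the Pell argument you give.

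The gap is where you say it is, and your sketch does not close it. The tools you list all point the wrong way. Lemma~4.1 asserts that $(t,s)$ lies \emph{outside} the classes of $(\pm r,0)$ and $(b-r,\pm(r-1))$; this is a non-membership statement about a \emph{different} solution and supplies no mechanism to push $(y,x)$ \emph{into} the principal class. Lemma~2.1(3) only bounds the number of classes carrying a primitive representation of $r^2$; for composite $r$ there are several, and nothing in your outline distinguishes $(y,x)$ from $(t,s)$ or from any other non-principal class. Lemma~3.2 excludes $\gcd(y,x)=g$ only when $\sqrt r<g<r$, so it does not rule out the primitive case $g=1$. The identity $cy^2-bz^2=r^2-s^2$ ties $y$ to $z$ but gives no grip on $x\bmod r$, and your ``analogue of Lemma~4.1 for $c$'' would again be a non-membership statement (about $(t,r)$ in $Z^2-cW^2=s^2$), not the membership you need for $(z,x)$.

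The argument in \cite{FF} does not go through the class group. It works with the simultaneous system $y^2-bx^2=r^2$, $z^2-cx^2=s^2$ directly, parameterizing the common $x$ by intersecting linear recurrent sequences $x=v_m=w_n$; congruence relations among these sequences, together with the structural constraints on $D(-1)$ triples from \cite{DF} and \cite{DFF}, force the underlying fundamental solutions to be $(r,0)$ and $(s,0)$, after which $r^2\mid x$ and $s^2\mid x$ drop out of the recurrences just as in your Pell reduction. To complete a class-group proof you would need an input specific to the \emph{largest} element $d$ of the quadruple, and no such input is available in this paper.
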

For the following lemma note that if $(x, y)$ is a primitive 
solution of $X^2-(k^2+1) Y^2=k^2$, then 
$(x, y)\not\sim (x, -y)$. Note also that if the representation
$(x, y)$ belongs to the integer $b$, then $(x, -y)$ belongs to the 
integer $-b$.(See beginning of Section 3.) 
\begin{lemma}
Let $k$ be an odd  positive integer such that there are two 
primitive solutions $(a, b)$ and $(a',b')$ 
to $X^2-(k^2+1)Y^2=k^2$ such that 
$(a, b)\not\sim(a',\pm b')$. Then there exist 
coprime integers $p,q$ both greater than $1$, with $k=pq$ such that 
$p^4$ and $q^4$ are primitively represented by the 
form $X^2-(k^2+1)Y^2$.
\end{lemma}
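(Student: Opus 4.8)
The plan is to translate the hypothesis into the language of forms of discriminant $4(k^2+1)$ and their classes, and then to exploit the arithmetic of the square roots of unity modulo $k^2$. Write $d=k^2+1$. By Section 3, the primitive solution $(a,b)$ of $X^2-dY^2=k^2$ corresponds to a form $(k^2,2\beta_1,c_1)$ lying in the principal class (it is obtained from $(1,0,-d)$ by an $SL_2(\mathbb{Z})$ transformation), and it belongs to a residue $\beta_1 \bmod k^2$; comparing discriminants gives $\beta_1^2\equiv d\equiv 1 \pmod{k^2}$. Likewise $(a',b')$ yields $\beta_2$ with $\beta_2^2\equiv 1\pmod{k^2}$. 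Using the fact recorded before Lemma 4.3 that $(x,-y)$ belongs to $-\beta$, together with the fact from Section 3 that two primitive representations belonging to the same residue are equivalent, I would argue that the hypothesis $(a,b)\not\sim(a',\pm b')$ translates precisely into $\beta_1\not\equiv\pm\beta_2\pmod{k^2}$.

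Next I would analyze the square roots of $1$ modulo $k^2$. Since $k$ is odd, writing $k=p_1^{e_1}\cdots p_m^{e_m}$ and applying the Chinese Remainder Theorem, every such square root reduces to $\pm1$ modulo each $p_i^{2e_i}$. The product $\beta_1\beta_2$ is again a square root of $1$, and the condition $\beta_1\not\equiv\pm\beta_2$ forces its sign vector across the prime powers to be \emph{mixed}, that is, neither all $+1$ (which would give $\beta_1\equiv\beta_2$) nor all $-1$ (which would give $\beta_1\equiv-\beta_2$). Collecting the primes where the two signs agree into $p$ and those where they disagree into $q$ produces a factorization $k=pq$ with $\gcd(p,q)=1$ and $p,q>1$, for which $\beta_1\equiv\beta_2\pmod{p^2}$ and $\beta_1\equiv-\beta_2\pmod{q^2}$.

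I would then factor the two principal forms through composition. For $i=1,2$ set $A_i=(p^2,2\beta_i,\cdot)$ and $A_i'=(q^2,2\beta_i,\cdot)$; these are genuine primitive forms of discriminant $4d$ because $\beta_i^2\equiv 1$ modulo both $p^2$ and $q^2$ and $p,q$ are odd. Since $\gcd(p^2,q^2)=1$, Definition 2.2 gives $A_i\circ A_i'=(k^2,2\beta_i,\cdot)$, the principal form, so $[A_i']=[A_i]^{-1}$. The congruences of the previous step give $[A_1]=[A_2]$ by Lemma 2.1(2), while $\beta_1\equiv-\beta_2\pmod{q^2}$ gives $[A_1']=[A_2']^{-1}$ (the inverse of a form negates its middle coefficient). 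Combining $[A_1']=[A_1]^{-1}$, $[A_2']=[A_2]^{-1}$, $[A_1]=[A_2]$ and $[A_1']=[A_2']^{-1}$ forces $[A_1]=[A_1]^{-1}$, i.e. $[A_1]^2$ is principal. Because $\gcd(\beta_1,p)=1$, the composition $A_1\circ A_1$ has leading coefficient $p^4$ and primitively represents $p^4$; being principal, it follows that $X^2-(k^2+1)Y^2$ primitively represents $p^4$, and symmetrically $[A_1']^2=[A_1]^{-2}$ is principal and yields $q^4$.

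I expect the main obstacle to be the composition bookkeeping rather than the conceptual core. Specifically, one must check carefully, using Definition 2.2 and the coprimality of $p^2$ and $q^2$, that the composition of the $p^2$-form and the $q^2$-form reproduces the principal form with the correct middle coefficient modulo $2k^2$ (so that it is genuinely in the principal class and not merely some form representing $k^2$), and that $A_1\circ A_1$ really gives a \emph{primitive} representation of $p^4$. The clean part is the conversion of inequivalence of solutions into a nontrivial splitting $k=pq$ via square roots of unity; the delicacy lies entirely in the sign and modulus conventions of the composition formula.
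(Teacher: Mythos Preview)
Your proposal is correct and follows essentially the same route as the paper: translate the two inequivalent primitive representations into two principal forms $(k^2,2\beta_i,\cdot)$ with $\beta_1\not\equiv\pm\beta_2\pmod{k^2}$ but $\beta_i^2\equiv1\pmod{k^2}$, use this to split $k=pq$, factor each $(k^2,2\beta_i,\cdot)$ as the composition of a $p^2$-form and a $q^2$-form, and deduce that the $p^2$-form is ambiguous so its square (with leading coefficient $p^4$) is principal. The paper packages the class-group step a bit more tersely, writing $IJ\sim P_1$ and $I^{-1}J\sim P_2$ directly and reading off $I\sim I^{-1}\sim J$, whereas you introduce four auxiliary forms $A_i,A_i'$; and the paper leaves the CRT splitting implicit while you spell it out, but the argument is the same.
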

\begin{proof}
 
The given two primitive solutions to $X^2-(k^2+1)Y^2=k^2$, 
by Lemma 2.1 part 1, give rise to 
 two forms 
$P_1$ and $P_2$ equivalent to $(1, 0, -(k^2+1))$ such that 
 $P_1=(k^2, 2b_1, c_1)$ and $P_2=(k^2, 2b_2, c_2)$, where 
$b_1, b_2$ satisfy 
$b_1\not\equiv \pm b_2 \mod k^2$ (see remark preceding this lemma). 
Observe that  from the discriminant equation we have
$b_1^2\equiv b_2^2 \mod k^2$. 
As 
  $\gcd(k, b_i)=1$, 
there exist
coprime integers
$p ,q$ greater than $1$ with $k=pq$ such that
\begin{equation}
b_1+b_2\equiv 0\mod p^2,\\\\ b_1-b_2\equiv 0\mod q^2.
\end{equation}
Let $I=(p^2, 2b_1, c_1q^2)$ and 
$J=(q^2, 2b_1, c_1p^2)$.
By Definition 2.2 (composition of forms), as $p, q$ are coprime, we have  
$IJ\sim P_1$ and $I^{-1}{J}\sim P_2$ and hence
$$(1, 0, -(k^2+1))\sim P_1\sim IJ\sim P_2\sim I^{-1}J.$$
It follows that 
$$I\sim I^{-1}\sim J,$$
and therefore $I^2\sim J^2\sim (1, 0, -(k^2+1))$. Using Definition 2.2 again,
it is easy to verify that 
$I^2=(p^4, 2\phi, \psi)$ for some integers
$\phi$ and $\psi$ and hence, as $I^2\sim (1, 0, -(k^2+1))$, the 
form $(1, 0, -(k^2+1))$ primitively represents $p^4.$ Similarly 
 $(1, 0, -(k^2+1))$  also primitively represents $q^4$.
\end{proof}

{\bf Proof of Theorem 1.1 }
Suppose that 
$c= p$ for some odd prime $p$.  
From (4.1) we have 
\begin{equation}
t^2-s^2=r^2(1+s^2)
\end{equation}
so that for some decomposition 
$r=r_1 r_2$ we have 
$$\frac{t-s}{r_1^2}\hskip2mm\frac{t+s}{r_2^2}=1+s^2=p.$$
 Therefore 
either 
\begin{equation}
t-s=pr_1^2, \hskip2mm t+s=r_2^2
\label{case1}
\end{equation}
or
\begin{equation}
t-s=r_1^2, \hskip2mm t+s=pr_2^2.
\label{case2}
\end{equation}
In the case of (\ref{case1}) we have 
$$2s+cr_1^2=r_2^2$$
which is not possible 
as $b<r^2<2s+cr_1^2=r_2^2\le r^2.$

Assume now that (\ref{case2}) holds. Then 
\begin{equation}
2s+r_1^2=cr_2^2
\label{rs}
\end{equation}
which is possible only when $r_2=1$,  as 
$2s^2\ge 2s+r^2\ge 2s+r_1^2=cr_2^2>s^2r_2^2.$
If $r_2=1$ then $r_1=r$ 
and from (\ref{case2}) we have 
$t-s=r^2$. 
However this is not possible as $t\equiv s\mod r^2$ implies
by Definition 3.1 (equivalence of solutions) that 
$(t, s)\sim (b-r, 1-r)$, which by Lemma 4.1 is not true. 

The proofs in the cases when $c=2 p^k$ and $d=p$  are similar, 
where in the latter case we work as above with the equation
$y^2-x^2=r^2d$.

Assume now that $s=p^k$. 
From (4.2) we have $t^2-r^2=bs^2$ so that 
  if $\gcd(t+r, t-r)=1$ then 
for some factorization $b=b_1 b_2$ we have either 
\begin{equation}
t-r=b_1 s^2, \hskip 1mm t+r=b_2
\end{equation} or 
\begin{equation}
t-r=b_1, \hskip 1mm t+r=b_2 s^2.
\end{equation}
If (4.8) holds, then 
we have 
$b_1s^2+2r=b_2\le b=1+r^2$ 
which is not possible as $r< s$.
If (4.9) holds, then 
$2r+b_1=b_2 s^2\le 2r+1 +r^2$, which gives  
$b_2=1$ in which case from (4.9) we have  
$s=r+1$ and  $t=r^2+s$. The latter is not possible as seen above in the proof 
of the case when $c=p$.

 We assume now that $\gcd(t+r, t-r)>1$.  
It follows that $t=p^m t_1$ and $r=p^m r_1$ where 
$\gcd(t_1, r_1)=1$ and $1\le m\le k$. Then (4.2) gives 
\begin{equation}
t_1^2-c r_1^2=p^{2k-2m},
\end{equation}
which by Lemma 3.3 is not possible if $m<k$. Hence 
$m=k$, that is $s|r$, which is not true as $r<s$. 
The case when $s=2p^k$ is similarly dealt with.

In the case of the integer $x$, note by Lemma 4.2 that $x$ is 
divisible by the least common multiple of $r$ and $s$. Hence, if
$x$ is not divisible by two distinct odd primes, then we must 
have 
$$r=2^{\alpha} p^m, \hskip3mm s=2^{\beta} p^n, $$ where 
$m, n, \alpha, \beta$ are non negative integers and $p$ is an 
odd prime.
Observe from (4.1) and (4.2) that 
$$\gcd(t, s)=\gcd(t, r)=\gcd(r, s).$$
Assume that $\alpha=\beta=0$ in which case $m<n$, so that 
$r|s$, which is not possible as this implies by Definition 3.1
that the solutions $(r, 0)$ and $(t, s)$ are equivalent, contradicting
Lemma 4.1. 

Assume now that $\alpha>0$ and $\beta=0$. As $r<s$ it follows that 
$m<n$. Hence $\gcd(r, s)=p^m=\gcd(t, s)$. Therefore 
$t=p^m t_1$ and from (4.2) we have 
$$t_1^2-c2^{2\alpha}=p^{2n-2m}$$
which is not possible by Lemma 3.3.

The other cases follow similarly and thus 
we have shown that $x$ is divisible by at least two distinct odd primes. 

{\bf Proof of Theorem 1.2}

We have $n=\gcd(t, s) |r$. Note that $n\ne r$ as if $r|s$, then as 
in the proof above, by Definition 3.1 
the solutions $(r, 0)$ and $(t, s)$ are equivalent, which is not 
the case by Lemma 4.1. 
Therefore, if  $n>1$, then 
$n=p$ or $n=q$. 
 If 
$t=nt_1$ and $s=ns_1$ with $\gcd(t_1, s_1)=1$, 
then from (4.1) we have 
\begin{equation}
t_1^2-bs_1^2=\left(\frac{r}{n}\right)^2 
\end{equation}
which is not possible 
by  Lemma 3.3. 
Thus $n=1$ and  
 we have two primitive solutions 
of $X^2-bY^2=r^2$, namely
$(b-r,  (r-1))$ and $(t, s)$. 
By Lemmas 4.1 and 4.3 it follows that 
$p^4$ and $q^4$ are represented by the 
form $(1, 0, -b)$.  Finally, Lemma 3.2 gives 
$p^4>r$ and $q^4>r$, which yields the desired result.

\hfill $\qed$.

{\bf Proof of Corollary 1.3}

If $b=(pq)^2+1$ and $pq\le\alpha$, then by assumption the
$D(-1)$ pair $\{1, b\}$ cannot be extended to a 
$D(-1)$ quadruple.
 Hence we assume that 
$pq> \alpha$. If 
$p\le \alpha^{\frac{1}{4}}\le (pq)^{\frac{1}{4}}$, then 
by Theorem 1.2 it follows that the 
$D(-1)$ pair $\{1, b\}$ cannot be extended to a 
$D(-1)$ quadruple.

\hfill $\qed$

{\bf Proof of Theorem 1.5}

If $\gcd(r, s)=1$, then $\gcd(t, s)=1$ and  
it follows from (4.1) and Lemma 4.1 that there are two  
primitive solutions of 
$X^2-bY^2=r^2$, 
namely $(b-r,  (r-1))$ and 
$(t,  s)$ that satisfy 
$(b-r, r-1)\not \sim (t, \pm s)$.
 Therefore by Lemma 4.3, 
there exists a factorization 
$r=pq$, where $p$ and $q$ are coprime and both 
greater than $1$, such that 
$p^4$ and $q^4$ are primitively represented by the form 
$(1, 0, -b)$. However as $r=P\phi$, at least one of 
$p$ or $q$ say $p$, divides $\phi<r^{\frac{1}{4}}$ 
and thus $p^4<r$ which is not possible by Lemma 3.2.

\hfill $\qed$

\end{document}